
\documentclass[amscd,leqno,8pt]{article}

\usepackage[all]{xy}
\usepackage{amsthm}
\usepackage{amssymb} 
\usepackage{amsmath,amscd} 




%
%

\def\NN{{\Bbb N}}

\def\QQ{{\Bbb Q}}
\def\RR{{\Bbb R}}

\def\ZZ{{\Bbb Z}}

\def\11{{1\kern-3.5pt 1}}
\def\mumu{{\mu\kern-4.2pt\mu}}

\def\boxtimes{\setbox0\hbox{$\Box$}\copy0\kern-\wd0\hbox{$\times$}}

%
%

\DeclareMathOperator{\grotimes}{\underline{\otimes}}

\newcommand{\lotimes}{\otimes^{\bf{L}}}

\def\coker{\operatorname {coker}}

\def\Ext{\operatorname {Ext}}

\def\Hom{\operatorname {Hom}}
\def\id{\operatorname {id}}
\def\im{\operatorname {im}}

\def\ker{\operatorname {ker}}


\def\Tor{\operatorname {Tor}}

\def\Ext{\operatorname{Ext}}

\def\gldim{\operatorname{gldim}}

\def\Hom{\operatorname{Hom}}
\def\RHom{\operatorname{RHom}}

\def\mod{\operatorname{mod}}
\def\Mod{\operatorname{Mod}}

\def\Tor{\operatorname{Tor}}

\let\oldtext\text
\def\text#1{\oldtext{\normalshape #1}}

\def\L{\Lambda}

%
%

\def\Coh{\operatorname{Coh}}

\def\RHom{\textbf{R}\operatorname{Hom}}

%

\newtheorem{lemma}{Lemma}[section]
\newtheorem{proposition}[lemma]{Proposition}
\newtheorem{theorem}[lemma]{Theorem}
\newtheorem{corollary}[lemma]{Corollary}

{

}

\theoremstyle{definition}

\newtheorem{definition}[lemma]{\sl Definition}

\theoremstyle{remark}

\title{A criterion of graded coherentness of tensor algebras and its application to higher dimensional Auslander-Reiten theory.}
\author{Hiroyuki MINAMOTO}
\date{}

\voffset=-30mm
\oddsidemargin= -10mm
\evensidemargin=-10mm
\textheight=250mm
\textwidth=180mm


\begin{document}

\maketitle

\begin{abstract}
Even if a ring $A$ is coherent, 
the polynomial ring $A[X]$ in one variable could fail to be coherent. 
In this note we show that $A[X]$ is graded coherent with the grading $\deg X=1$. 
More generally, we give a criterion of graded coherentness of the tensor algebra $T_{A}(\sigma)$ 
of a certain class of  bi-module $\sigma$.

As an application of the criterion,  
we show that there is  a relationship between 
higher dimensional Auslander-Reiten theory and 
graded coherentness of higher preprojective algebras.
\end{abstract}

\section{Introduction}
In noncommutative projective geometry 
there is the important procedure  
which construct a graded ring
from certain data. 
The resultant graded ring is called the twisted homogeneous coordinate ring 
and plays an important role in noncommutative projective geometry and representation theory
(\cite{AZ,HIO,Min,MM,Pol}). 
This construction is similar 
to the procedure to take the tensor algebra $T_{A}(\sigma)$ over a ring $A$ of an $A-A$ bi-module $\sigma$ 
equipped with the grading $\deg \sigma := 1$.
In some case twisted homogeneous coordinate rings are actually tensor algebras. 
Although 
these are natural procedures to obtain graded rings,   
it is recognized that in general we can't expect that the resultant graded algebras are graded Noetherian 
and that only we can hope that these are graded coherent. 
For more details see \cite[Introduction]{Pol}, 
which suggest us that 
it is inevitable that 
we study graded coherentness more deeply.  

The condition of
(graded) coherentness is similar but weaker than that of 
(graded) Noetherianness. 
It is known that many theorems about finitely generated modules over (graded) Noetherian rings
 can be extended to finitely presented modules over (graded) coherent rings. 
However there is a significant difference between them. 
It is standard that 
if a ring $A$ is right  Noetherian, then the polynomial ring $A[X]$ in one variable is again right Noetherian. 
Contrary to this, 
it is well-known for specialists that 
even if a ring $A$ is right coherent,   
the polynomial ring $A[t]$ in one variable could fail to be coherent. 
In fact,   
Soublin \cite{Soublin} showed that the ring $A=\QQ[[x,y]]^{\Pi \NN}$ 
has such a property. 

In this note we show that if we take gradings into account, then the polynomial ring $A[X]$ become graded coherent 
for any coherent ring $A$. 
Namely we have    
\begin{theorem}[Corollary \ref{grcohcor}]
If a ring $A$ is right coherent, 
then the polynomial ring $A[X]$ with the grading $\deg X =1$ is graded right coherent. 
\end{theorem}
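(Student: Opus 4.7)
My plan is to deduce this corollary by applying the general criterion of graded coherentness of tensor algebras $T_A(\sigma)$ developed in the body of the paper to the case $\sigma = A$, viewed as the regular $A$-$A$ bi-module. Indeed $A[X] = T_A(A)$ once one sets $\deg X = 1$, and the bi-module $A$ is as well-behaved as one could hope for --- free of rank one on both sides --- so one expects it to satisfy the criterion's hypotheses using only the coherentness of $A$.

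To justify that expectation, let me sketch the direct argument that I take the criterion to be formalising in this case. Graded right coherentness of $A[X]$ is equivalent to showing that, for every finitely generated graded submodule $M$ of a finitely generated free graded module $F = \bigoplus_{i=1}^r A[X](-d_i)$ and every finite homogeneous surjection
$$\pi : P := \bigoplus_{j=1}^s A[X](-e_j) \twoheadrightarrow M,$$
the syzygy $K := \ker \pi$ is again finitely generated as a graded $A[X]$-module.

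I would verify this in two stages. Degree-wise, each $P_n$ is a finitely generated free right $A$-module, and $M_n$ is a finitely generated $A$-submodule of the finitely generated free $A$-module $F_n$ (using $A[X]_k \cong A$ as right $A$-modules for every $k \geq 0$); coherentness of $A$ then gives $M_n$ finitely presented, whence $K_n = \ker(P_n \to M_n)$ is finitely generated over $A$. Asymptotically, set $N := \max_{i,j}(d_i, e_j)$; for $n > N$, all generators of $P$, $F$, $M$ have entered, so multiplication by $X$ induces isomorphisms $P_{n-1} \xrightarrow{\sim} P_n$ and $F_{n-1} \xrightarrow{\sim} F_n$ and a surjection $M_{n-1} \twoheadrightarrow M_n$. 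The snake lemma applied to the ladder of short exact sequences $0 \to K_{\bullet} \to P_{\bullet} \to M_{\bullet} \to 0$ (for $\bullet \in \{n-1,n\}$) with vertical maps $\cdot X$ then forces $\cdot X : K_{n-1} \twoheadrightarrow K_n$ as well. Combining, $K$ is generated as a graded $A[X]$-module by $A$-generators of the finitely many pieces $K_0, \ldots, K_N$, a finite set.

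The step I expect to be the main conceptual obstacle is this asymptotic surjectivity $K_{n-1} \cdot X = K_n$: it is precisely where gradedness enters in an essential way, since Soublin's counterexample rules out any ungraded analogue. A minor but necessary preliminary is translating ``finitely generated as a graded $A[X]$-module'' into the degree-wise statement --- each graded piece finitely generated over $A$, together with eventual surjectivity of $\cdot X$ --- that the snake-lemma step relies on.
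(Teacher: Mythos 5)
Your proposal follows the paper's own route: the paper proves this corollary simply by invoking Theorem \ref{grcohcr}.1 with $\sigma=A$ (more generally $\sigma=A^{\oplus n}$ for the free algebra), whose purity hypotheses hold trivially with $m=0$ because $\sigma$ is free on both sides, and your supplementary direct argument is essentially the proof of Theorem \ref{grcohcr} specialized to this case, where all $\Tor$ terms vanish and only the degree-wise ladder argument survives. One small correction to that sketch: with $\cdot X$ an isomorphism on $P_{n-1}\to P_n$, the snake lemma gives $\coker(\cdot X\colon K_{n-1}\to K_n)\cong\ker(\cdot X\colon M_{n-1}\to M_n)$, so the surjectivity of $\cdot X$ on $M$ is not what you need --- you need its \emph{injectivity}, which does follow from the isomorphism $F_{n-1}\xrightarrow{\sim}F_n$ you already recorded, since $M\subseteq F$. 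With that repair the argument is complete and matches the paper's.
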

More generally 
we give 
a criterion of graded coherentness of the tensor algebra $T_{A}(\sigma)$ 
of a certain class of an $A-A$ bi-module $\sigma$. 
The condition on an $A-A$ bi-module $\sigma$ is of homological nature. 
Therefore the tensor algebra of a bi-modules satisfying the condition is well-behaved 
and contains an important example: higher preprojective algebra, which is studied in Section 3. 

In Section 3, we apply the criterion to show that 
there is a relationship between 
higher dimensional Auslander-Reiten theory of $n$-representation infinite algebra 
and 
graded coherentness of higher preprojective algebras.
(for the back ground and the definitions of these, please see the beginning of Section 3.)

\vspace{10pt}
\noindent
\textbf{Notations and conventions} 

Unless otherwise noted all modules and all graded modules considered are right modules 
and graded modules. 
For a graded module $M := \oplus_{n\in \ZZ} M_n$, we denote by $M(i)$ the $i$-degree shift, that is, $M(i)_{n}:= M_{n+i}$. 
We denote by $\grotimes$ the graded tensor product. 

\vspace{10pt}
\noindent
\textbf{Acknowledgment}
This work was supported by JSPS KAKENHI Grant Number 241065.

\section{A criterion of graded coherentness}

First we recall the definitions of coherent modules and coherent rings.

\begin{definition}
Let $A$ be a ring (resp. graded ring) and $M$ an $A$-module (resp. graded $A$-module). 

\begin{enumerate}
\item 
An $A$-module (resp. graded $A$-module) $M$ is called coherent (resp. graded coherent), 
if it satisfies the following two conditions: 

(i) $M$ is  finitely generated over $A$;  

(ii) 
for every $A$-homomorphism (resp. graded $A$-homomorphism) 
$f:P\to M$ with $P$ a finitely generated free $A$-module 
(resp. a finitely generated graded free $A$-module), 
the kernel $\ker(f)$ of 
is finitely generated over $A$. 

\item 
$A$ is called right coherent 
(resp. graded right coherent) 
if the regular $A$-module $A_{A}$ is coherent (resp. graded coherent).  

\end{enumerate}

\end{definition}
It is known that the full subcategory $\Coh A$ of coherent modules 
is an extension closed abelian subcategory of the category $\Mod A$ of $A$-modules.
A coherent $A$-module is finitely presented over $A$. 
If a ring  $A$ is right coherent. 
Then a $A$-module $M$ is coherent if and only if it is finitely presented over $A$. 
(See e.g. \cite{Pol}.)

\vspace{10pt}

Let $A$ be a ring.  
A complex $M^{\bullet}$ of $A$-modules is called \textit{pure} if its cohomology group concentrates in degree $0$,
i.e., $\textup{H}^{i}(M^{\bullet})=0 $ for $i \neq 0$.

\begin{theorem}\label{grcohcr}

Let $A$ be a right coherent  ring 
and $\sigma$  an $A$-$A$-bimodule 
such that $\sigma$ is finitely presented as right $A$-modules and 
the iterated derived tensor product  $\sigma^{\lotimes_{A} n}$ of $\sigma$ is pure for $n \geq 1$. 
We denote $\sigma^{\lotimes_{A} n}$ by $\sigma^n$. 
Let $T$ be the tensor algebra $T_{A}(\sigma)$ of $\sigma$ over $A$:
\[
T := A \oplus \sigma \oplus \sigma^2 \oplus \sigma^{3} \oplus \cdots 
\]
with the grading  $\deg \sigma:= 1$.

\begin{enumerate}
\item
Assume that 
for any finitely presented   $A$-module $M$ 
there is a natural number $m$ 
such that 
$(M \otimes_{A} \sigma^m) \lotimes_{A} \sigma^{n}$ 
is pure for $n \geq 0$. 
Then 
$T$ is graded right coherent. 

\item 
If $T$ has finite right graded global dimension, 
then the converse holds.  
Namely,  
if $T$ is graded right coherent, 
then 
for any finitely presented  $A$-module $M$ 
there is a natural number $m$ 
such that 
$(M \otimes_{A} \sigma^m) \lotimes_{A} \sigma^{n}$ 
is pure for $n \geq 0$. 
\end{enumerate}

\end{theorem}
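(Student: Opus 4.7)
Plan for Part (1). I would aim for the stronger assertion that $M \otimes_A T$ is coherent as a graded right $T$-module for every finitely presented right $A$-module $M$; setting $M = A$ gives the theorem. As a preliminary, by induction on $n$ using coherence of $A$, finite presentation of $\sigma$, and purity of $\sigma^{\lotimes_A n}$, I would show each $\sigma^n$ is coherent as a right $A$-module, the Tor-vanishing from purity preserving finite presentation at each inductive step. Consequently every graded piece of $M \otimes_A T$ is coherent over $A$. For a finitely generated graded submodule $L \subset M \otimes_A T$ and a surjection $\bigoplus_{i=1}^k T(-d) \twoheadrightarrow L$ (after shifting all generators to a common degree $d$ via $T$-multiplication), the kernel in each degree $p$ is analyzed via the Tor long exact sequences arising from
\[
0 \to K \to A^k \to I \to 0, \qquad 0 \to I \to M \otimes_A \sigma^d \to C \to 0
\]
tensored with $\sigma^{p-d}$, where $I$ is the image and $C$ the cokernel of the combined defining map. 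Applying the hypothesis to the finitely presented cokernel $C$ supplies an integer $m$ with $(C \otimes_A \sigma^m) \lotimes_A \sigma^n$ pure for all $n \geq 0$; combined with the identification $\sigma^{m+n} \cong \sigma^m \otimes_A \sigma^n$ from the global purity hypothesis, this forces the $\Tor_1^A(C, \sigma^{p-d})$ terms to ``propagate'' from their values at the threshold $p = d + m$, so the kernel is generated as a graded $T$-module by its elements of degree $\leq d + m$. These low-degree generators form finitely many coherent right $A$-modules, yielding finite generation of the full kernel.

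Plan for Part (2). This converse is more direct. Given a finitely presented right $A$-module $M$, form $N := M \otimes_A T$; tensoring a finite presentation of $M$ with $T$ exhibits $N$ as a finitely presented graded right $T$-module. Since $T$ is graded right coherent of finite right graded global dimension, $N$ admits a bounded resolution
\[
0 \to P_r \to \cdots \to P_0 \to N \to 0
\]
with each $P_i = \bigoplus_j T(-d_{ij})$ a finitely generated graded free right $T$-module. Set $m := \max_{i,j} d_{ij}$; boundedness of the resolution is what makes this maximum finite. For $p \geq m$ every $(P_i)_p = \bigoplus_j \sigma^{p - d_{ij}}$ with $p - d_{ij} \geq 0$, and the degree-$p$ slice $(P_\bullet)_p \to N_p$ is an exact sequence of right $A$-modules. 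The global purity hypothesis yields $\Tor_j^A(\sigma^k, \sigma^n) = 0$ for $j \geq 1$ and $k, n \geq 0$, so $(P_\bullet)_m$ is a resolution of $N_m = M \otimes_A \sigma^m$ by right $A$-modules acyclic for $- \otimes_A \sigma^n$. Since the differentials of $P_\bullet$ are $T$-linear, tensoring $(P_\bullet)_m$ over $A$ with $\sigma^n$ reproduces exactly the degree-$(m+n)$ slice $(P_\bullet)_{m+n}$, which is exact. Hence $\Tor_i^A(M \otimes_A \sigma^m, \sigma^n) = H_i((P_\bullet)_m \otimes_A \sigma^n) = 0$ for $i \geq 1$, establishing the required purity of $(M \otimes_A \sigma^m) \lotimes_A \sigma^n$.

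The principal technical difficulty is concentrated in Part (1), in translating the Tor-level hypothesis into a concrete finite-generation statement for graded kernels via careful bookkeeping of the Tor contributions in each graded degree. Part (2) is essentially a clean homological argument: coherence plus finite global dimension provides a bounded finite free resolution, and the purity of $\sigma^{\lotimes_A s}$ combined with $T$-linearity of the differentials makes the Tor computation collapse.
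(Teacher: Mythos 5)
Your proposal is correct and follows essentially the same route as the paper: for part (1), the threshold argument applying the purity hypothesis to the finitely presented cokernel $C$ to show the multiplication maps $\mu$ become isomorphisms in high degrees, so the kernel is generated in bounded degrees by coherent $A$-modules; for part (2), slicing a bounded finite free resolution of $M\otimes_A T$ at a degree $m$ past all generator degrees and using $\Tor$-acyclicity of the $\sigma^k$ against $\sigma^n$. The only differences are cosmetic (you prove coherence of $M\otimes_A T$ rather than of $T$ directly, and normalize generators to a common degree), and they do not change the argument.
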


Before giving a proof, we fix notations. 
For  graded $T$-module $X:= \oplus_{n\in \ZZ}X_{n}$,  
we denote by $\mu_{X,m,n}$ the $A$-homomorphism $X_{m}\otimes_{A}\sigma^{n} \to X_{m+n}$ 
induced by the multiplication $X \grotimes_{A} T \to X,\,\, x \otimes t \mapsto xt$.
Note that this $A$-homomorphism has the following compatibility with graded $T$-homomorphisms:
let $f:X \to Y$ be  a graded $T$-homomorphism. 
Then  
we have $f_{m+n} \circ \mu_{X,m,n} = \mu_{Y,m,n}\circ (f_{m} \otimes \id_{\sigma^{n}})$.
\[
\begin{xymatrix}
@C=20mm
{
X_{m} \otimes_{A} \sigma^{n} \ar[r]^{f_{m} \otimes \id_{\sigma^{n}}} \ar[d]_{\mu_{X,m,n}} 
&  Y_{m} \otimes_{A} \sigma^{n} \ar[d]^{\mu_{Y,m,n}} \\
 X_{m+n} \ar[r]_{f_{m+n}} & Y_{m+n} 
}\end{xymatrix}
\]

\begin{proof}
1. 
First note that 
we can easily check that $\sigma^n$ is coherent as a right $A$-module 
by induction on $n$. 

Let $P,Q$ be finitely generated graded free $T$-modules. 
We prove that 
the kernel $K:= \ker(f)$ of 
a graded $T$-homomorphism $f : P \to Q$ is finitely generated. 
Set $C:= \coker(f)$ and $I := \im(f)$.  
Then for any $s \in \ZZ$, we have $K_{s}= \ker(f_{s}),C_{s} = \coker(f_{s})$ and $I_{s} = \im(f_{s})$.  
In other words,  
looking at degree $s$-part we obtain the exact sequences of  $A$-modules: 
\[
\begin{split}
(\textbf{A}_s):  \quad  &0\to I_{s} \to Q_{s} \to C_{s} \to 0,\\
(\textbf{B}_s):  \quad &0\to K_{s} \to P_{s} \to I_{s}\to 0.
\end{split}
\]
Applying the functor $-\otimes_{A} \sigma^m$ to the exact sequence $(\textbf{A}_{s})$, 
we obtain 
the isomorphisms 
\[
(\textbf{C}_{s}): \quad \Tor^{A}_{i}(I_{s},\sigma^m) \cong \Tor^{A}_{i+1}(C_{s},\sigma^m) \textup{ for } i \geq 1 
\]
and 
the following commutative diagram $(\textbf{D}_{s})$:  
\[
\begin{xymatrix}{
0 \ar[r] & \Tor^{A}_{1}(C_{s},\sigma^m) \ar[r] & I_{s} \otimes_{A} \sigma^m \ar[d]_{\mu_{I,s,m}} \ar[r] & 
Q_{s} \otimes_{A} \sigma^m \ar[d]_{\mu_{Q,s,m}} \ar[r] & C_{s}\otimes_{A} \sigma^m\ar[d]_{\mu_{C,s,m}} \ar[r] & 0 \\
& 0 \ar[r] & I_{s+m} \ar[r] & Q_{s+m} \ar[r] & C_{s+m} \ar[r] &0. 
}\end{xymatrix}
\]
where the top and the bottom rows are exact.

We may assume that 
$P,Q$ are of form 
\[ P = \oplus_{p \leq i \leq q} T(-i)^{\oplus a_{i}}, Q = \oplus_{p\leq i \leq q} T(-i)^{\oplus b_{i}}. \] 
Then 
for $s \geq q$ 
we have 
$
P_{s} = 
\oplus_{p \leq i \leq q} (\sigma^{s- i})^{\oplus a_{i}},
Q_{s} =
 \oplus_{p \leq i \leq q} (\sigma^{s- i})^{\oplus b_{i}}$. 
Therefore 
for $s \geq q$ and $n \geq 0$
 the morphisms $\mu_{P,s,n}$ and $\mu_{Q,s,n}$ are isomorphisms. 
Hence by the right exactness of the functor $-\otimes_{A} \sigma^n$,  
 we see that the morphism 
$\mu_{C,s,n}: C_{s} \otimes_{A} \sigma^n \to C_{s+n}$ is an isomorphism for $s \geq q$ and $n \geq 0$.

By the first remark $P_{q}$ and $Q_{q}$ are coherent $A$-modules. 
Hence the cokernel $C_{q}$ of $f_{q} : P_{q} \to Q_{q}$ is a coherent $A$-module. 
Let $n$ be a natural number such that 
$C_{q+n} \lotimes_{A} \sigma^{m} \simeq (C_{q} \otimes_{A} \sigma^{n}) \lotimes_{A} \sigma^{m}$ is pure for $m \geq 0$. 
Then by the isomorphisms $(\textbf{C}_{q+n})$ we see that the complex $I_{q+n} \lotimes_{A} \sigma^{m}$ is pure. 
Moreover,  
since $\mu_{Q,q+n,m}$ and $\mu_{C,q+n,m}$ are isomorphisms 
and $\Tor_{1}^{A}(C_{q+n},\sigma^m)=0$,  
by  the commutative diagram $(\textbf{D}_{q+n})$, 
we see that the  morphism $\mu_{I,q+n,m}$ is an isomorphism. 

From the exact sequence $(\textbf{B}_{s})$,  
we obtain  the commutative diagram 
similar to  $(\textbf{D}_{s})$. 
Therefore,  
in the same way, 
we see that 
the morphism 
$\mu_{K,q+n,m}:  K_{q+n} \otimes_{A} \sigma^{m} \to K_{q+n+m}$ is an isomorphism  for $m\geq 0$.  
This shows that 
$K$ is generated by $\oplus_{p \leq i \leq q+n} K_{i}$ 
as a graded $T$-module. 
Since $K_{s}$ is the kernel of $A$-homomorphism between coherent modules, 
$K_{s}$ is coherent. In particular $K_{s}$ is finitely generated over $A$. 
Hence  
we conclude that $K$ is  finitely generated over $T$.

2. 
Let $M$ be a finitely presented $A$-module. 
Then we easily check that 
$M \grotimes_AT$ is a finitely presented graded $T$-module. 
Since $T$ is  graded right coherent and assumed to have finite right graded dimension, 
we have a graded  projective resolution of $M \grotimes_AT$ of finite length  
\[
(\textbf{E}): \quad 0 \to Q^{-g} \xrightarrow{d^{-g}} \cdots 
\xrightarrow{d^{-2}} Q^{-1} \xrightarrow{d^{-1}} Q^{0} \to M \grotimes_{A}T \to 0. 
\] 
such that 
each term $Q^{-i}$ is finitely generated graded $T$-module. 
Looking at degree $s$-part of $(\textbf{E})$ 
we obtain an exact sequence 
\[
(\textbf{E}_{s}): \quad 0 \to Q^{-g}_{s} \xrightarrow{d^{-g}_{s}} 
\cdots \xrightarrow{d^{-2}_{s}} Q^{-1}_{s} \xrightarrow{d^{-1}_{s}} Q^{0}_{s} \to M \otimes_{A}\sigma^{s} \to 0. 
\] 
By the same consideration as in the proof of (1), 
we see that 
there is a natural number $m$ 
such that the morphisms 
$\mu_{Q^{-i},m,n}:  Q^{-i}_{m} \otimes_{A} \sigma^n \to Q^{-i}_{m+n}$ are isomorphisms  
for $0 \leq i \leq g$ and $n \geq 0$.  
These isomorphisms induces an isomorphism 
$ (\textbf{E}_{m}) \otimes_{A} \sigma^n \xrightarrow{\cong} (\textbf{E}_{m+n})$ 
of  sequences of $A$-modules for $n \geq 0$. 
This shows that the sequence $(\textbf{E}_{m}) \otimes_{A} \sigma^{n}$ is exact.   
Since 
$(\textbf{E}_{m})$ is a $-\otimes_{A}\sigma^n$-acyclic resolution of $M \otimes_{A} \sigma^m$,  
 we conclude  that 
$(M \otimes_{A} \sigma^m) \lotimes_{A} \sigma^{n}$ 
is pure. 
\end{proof} 

In a case that we can a priori verify that $T_{A}(\sigma)$ is of finite right graded  global dimension, 
Theorem \ref{grcohcr} gives an equivalent condition that the tensor algebra $T_{A}(\sigma)$ 
to be graded right coherent. 
In particular we have 
\begin{corollary} 
Let $A$ and $\sigma$ be as in Theorem \ref{grcohcr}. 
Assume that $A$ is of finite right and left global dimension. 
Then the following conditions are equivalent: 
\begin{enumerate}
\item 
the tensor algebra $T_{A}(\sigma)$ with the grading $\deg \sigma = 1$ is graded right coherent, 

\item 
for any finitely presented   $A$-module $M$ 
there is a natural number $m$ 
such that 
$(M \otimes_{A} \sigma^m) \lotimes_{A} \sigma^{n}$ 
is pure for $n \geq 0$.
\end{enumerate}
\end{corollary}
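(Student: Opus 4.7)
The implication $(2) \Rightarrow (1)$ is the content of Theorem \ref{grcohcr}(1). For the converse $(1) \Rightarrow (2)$, Theorem \ref{grcohcr}(2) reduces the problem to verifying that $T$ has finite right graded global dimension. I would establish this independently of hypothesis (1), showing in fact that $\gldim(T) \leq 2d + 1$, where $d$ denotes the (right and left) global dimension of $A$.

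The plan is to exploit the bar-type short exact sequence of $T$-bimodules
\[ 0 \to T \otimes_A \sigma \otimes_A T \to T \otimes_A T \to T \to 0 \]
characteristic of any tensor algebra. The purity hypothesis on $\sigma^{\lotimes n}$ implies $\Tor^A_i(\sigma^k, \sigma^l) = 0$ for all $i \geq 1$ and all $k, l \geq 0$, so both tensor products in the above sequence coincide with their derived counterparts $T \lotimes_A T$ and $T \lotimes_A \sigma \lotimes_A T$. Applying $N \lotimes_T -$ for an arbitrary graded right $T$-module $N$, together with the standard identification $N \lotimes_T (T \lotimes_A Y) \simeq N \lotimes_A Y$ arising from the derived induction-restriction adjunction, I obtain a distinguished triangle
\[ N \lotimes_A \sigma \lotimes_A T \to N \lotimes_A T \to N \]
in the derived category of graded right $T$-modules.

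For any graded right $T$-module $M$, applying $\RHom_T(-, M)$ and using the derived adjunction $\RHom_T(X \lotimes_A T, M) \simeq \RHom_A(X, M)$ (with $M$ regarded as an $A$-module via restriction), the first two terms become $\RHom_A(N, M)$ and $\RHom_A(N \lotimes_A \sigma, M)$ respectively. Since $A$ has global dimension $d$, the first has cohomological amplitude at most $d$; for the second, $N \lotimes_A \sigma$ has amplitude at most $d$ (by finite left global dimension of $A$), so the standard spectral sequence $\Ext^p_A(H^{-q}(N \lotimes_A \sigma), M) \Rightarrow H^{p+q}\RHom_A(N \lotimes_A \sigma, M)$ gives amplitude at most $2d$. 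Rotating the triangle then forces $\RHom_T(N, M)$ to have amplitude at most $2d + 1$, whence $\pd_T(N) \leq 2d + 1$. With finite graded global dimension of $T$ established, Theorem \ref{grcohcr}(2) yields $(2)$.

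The main technical obstacle will be carefully verifying the Tor-vanishing implication from purity and ensuring that all the derived adjunction identifications and amplitude estimates go through in the graded setting, including for modules with unbounded grading; in particular, one must check that the passage between the underived bar sequence and its derived version is justified componentwise in the grading. These points are routine in spirit but require attention to detail.
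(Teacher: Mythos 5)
Your proposal is correct, and it handles the two implications the same way the paper does at the top level: $(2)\Rightarrow(1)$ is Theorem \ref{grcohcr}.1, and $(1)\Rightarrow(2)$ reduces to checking that $T=T_A(\sigma)$ has finite right graded global dimension so that Theorem \ref{grcohcr}.2 applies. Where you diverge is in how that finiteness is established. The paper disposes of it in one line by citing \cite[Theorem A.1]{Min}, which gives the bound $\textup{right graded gl.dim}\, T \leq \textup{right gl.dim}\, A + \textup{left flat dim}\, {}_A\sigma + 1$; finiteness is then immediate from the hypothesis on $A$. You instead reprove such a bound from scratch using the standard bimodule presentation $0 \to T\otimes_A\sigma\otimes_A T \to T\otimes_A T \to T \to 0$ of a tensor algebra, observing that the purity of the $\sigma^{\lotimes n}$ forces $\Tor^A_i(\sigma^k,\sigma^l)=0$ for $i\geq 1$ (so underived and derived tensor products agree componentwise), and then extracting from the resulting triangle the estimate $\pd_T(N)\leq 2d+1$ for every graded right $T$-module $N$. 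Your chain of identifications and amplitude estimates is sound, and the exactness of the bar-type sequence for a tensor algebra is standard, so the argument goes through; your bound $2d+1$ is a slightly coarser specialization of the cited one (since $\textup{left flat dim}\,{}_A\sigma\leq d$), but only finiteness matters here. What your route buys is self-containedness — it is essentially an inline proof of the relevant case of \cite[Theorem A.1]{Min} — at the cost of the extra bookkeeping you yourself flag (graded derived adjunctions, componentwise Tor-vanishing); what the paper's route buys is brevity and the sharper bound.
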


\begin{proof}
By \cite[Theorem A.1]{Min} we have 
\[\textup{right graded gl.dim }  T_{A}(\sigma) \leq \textup{right gl.dim } A + \textup{left flat dim } {}_{A}\sigma + 1.\] 
Therefore by the assumption we see that $T_{A}(\sigma)$ is of finite right graded global dimension.
\end{proof}

Using the criterion of Theorem \ref{grcohcr}.1, 
we can easily check that the following graded algebras are graded coherent.  

\begin{corollary}\label{grcohcor}
If $A$ is right coherent,  
then the free algebra $A\langle X_{1}, \dots, X_{n} \rangle$ 
with the grading $\deg X_{i}=1$ for $i=1,\dots , n$ 
is graded right coherent.  
In particular the polynomial algebra $A[X]$ with the grading $\deg X = 1$ is graded right coherent. 
\end{corollary}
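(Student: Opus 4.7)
The plan is to apply Theorem~\ref{grcohcr}.1 to the $A$-$A$-bimodule $\sigma := A^{\oplus n}$, equipped with the evident bimodule structure in which both the left and right actions are free of rank $n$. Under this choice, the tensor algebra $T_A(\sigma)$ is canonically identified with $A\langle X_1,\ldots,X_n\rangle$ as a graded ring (with $\deg X_i = 1$), so the claim reduces to checking the hypotheses of Theorem~\ref{grcohcr}.1 for this particular $\sigma$, and the statement for the polynomial ring $A[X]$ then follows as the specialization $n=1$.

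I would first verify the basic homological conditions required to invoke Theorem~\ref{grcohcr}.1. The bimodule $\sigma$ is trivially finitely presented as a right $A$-module, being free of finite rank. Moreover, $\sigma$ is free (hence flat) as a left $A$-module, so every derived tensor product collapses to an ordinary one; explicitly, $\sigma^{\lotimes_A k} \simeq \sigma^{\otimes_A k} \cong A^{\oplus n^k}$ as an $A$-$A$-bimodule, and in particular each $\sigma^{\lotimes_A k}$ is concentrated in degree $0$, i.e.\ pure for all $k \geq 1$.

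Next I would verify the purity assumption on finitely presented modules. Given any finitely presented right $A$-module $M$, the flatness of $\sigma$ as a left $A$-module implies that the functor $-\otimes_A \sigma^k$ is exact for every $k \geq 0$. Choosing $m := 0$, the complex
\[
(M \otimes_A \sigma^{0}) \lotimes_A \sigma^{k} \;=\; M \otimes_A \sigma^{k}
\]
is then concentrated in degree $0$, hence pure, for every $k \geq 0$. Theorem~\ref{grcohcr}.1 applies and yields that $A\langle X_1,\ldots,X_n\rangle$ is graded right coherent.

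I do not anticipate any genuine obstacle in this argument: the entire corollary is a bookkeeping consequence of Theorem~\ref{grcohcr}.1 once one recognizes that the free bimodule $\sigma = A^{\oplus n}$ trivializes each homological hypothesis. The only conceptual point worth emphasizing is that the required flexibility in choosing $m$ is unused here, because the free left $A$-module structure on $\sigma$ already makes purity hold with $m=0$; by contrast, in the more delicate applications (e.g.\ higher preprojective algebras treated in Section~3), the freedom to enlarge $m$ is essential.
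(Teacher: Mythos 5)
Your proposal is correct and is exactly the argument the paper has in mind: the paper gives no details for Corollary \ref{grcohcor} beyond the remark that it follows easily from Theorem \ref{grcohcr}.1, and your verification (taking $\sigma=A^{\oplus n}$ free on both sides, so that all derived tensor products collapse and $m=0$ works for every finitely presented $M$) is the intended routine check.
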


\begin{corollary}
Let $A$ be a finite dimensional algebra over a field $k$. 
Then the tensor algebra $T_{A}(A\otimes_kA)$ with the grading $\deg (A \otimes_{k} A) =1$ is graded coherent. 
\end{corollary}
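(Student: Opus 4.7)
The plan is to apply Theorem~\ref{grcohcr}.1 with $\sigma := A \otimes_k A$. Since $A$ is finite dimensional over $k$, it is right Noetherian and in particular right coherent. Viewed as a right $A$-module, $\sigma = A \otimes_k A$ is free of rank $\dim_k A$ (with basis a $k$-basis of the left tensor factor), so it is finitely presented over $A$ on the right; the symmetric computation shows that $\sigma$ is free, hence flat, as a left $A$-module as well. This two-sided freeness is the only structural input I will need.

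Next I would verify that $\sigma^{\lotimes_A n}$ is pure for every $n \geq 1$. Since $\sigma$ is left $A$-flat, the derived tensor product collapses to the ordinary tensor product, and a straightforward induction on $n$ gives
\[
\sigma^{\lotimes_A n} \;\cong\; \sigma^{\otimes_A n} \;\cong\; A^{\otimes_k (n+1)},
\]
a module placed in degree $0$, hence trivially pure. Along the way I would also record that each $\sigma^n$ remains free as both a left and a right $A$-module, since this is what makes the next step work.

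For the main hypothesis of Theorem~\ref{grcohcr}.1, I would claim that $m = 0$ works uniformly: for any finitely presented right $A$-module $M$, flatness of each $\sigma^n$ as a left $A$-module forces $M \lotimes_A \sigma^n = M \otimes_A \sigma^n$, which is a module concentrated in degree $0$, hence pure. With this in hand, all hypotheses of Theorem~\ref{grcohcr}.1 are met, and the theorem yields that $T_A(A \otimes_k A)$ is graded right coherent.

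There is no substantial obstacle here: the argument is essentially bookkeeping around the fact that $A \otimes_k A$ is free over $A$ on both sides. The only point that deserves care is to consistently distinguish between the left and right $A$-module structures on $\sigma$ when invoking flatness to suppress derivation of tensor products, but this is harmless since freeness holds on both sides simultaneously.
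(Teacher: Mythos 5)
Your proof is correct and takes exactly the route the paper intends: the corollary is stated there as an unproved immediate application of Theorem \ref{grcohcr}.1, and your observation that $\sigma=A\otimes_kA$ and each $\sigma^n\cong A^{\otimes_k(n+1)}$ are free as both left and right $A$-modules --- so every derived tensor product collapses and $m=0$ works uniformly --- is precisely the omitted bookkeeping. (For the two-sided phrase ``graded coherent'' one applies the same argument to $A^{\op}$ to get graded left coherence, since $A^{\op}\otimes_kA^{\op}\cong(A\otimes_kA)^{\op}$.)
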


Since $T_{A}(A \otimes_k A)$ is the bar resolution algebra, 
I hope that 
the above corollary has some application to Homological algebra of finite dimensional algebras. 


\section{Graded coherent-ness of $n+1$-preprojective algebra 
and its relationship to higher dimensional Auslander-Reiten Theory}

In \cite{Min} $n$-Fano algebras and its variants are introduced 
from a noncommutative algebro-geometric point of view.
Among these variants, 
$n$-quasi-extremely Fano algebra (qe Fano algebra for short) is defined by the simplest condition. 
In \cite{HIO} 
Herschend Iyama and Oppermann
called $n$-qe Fano algebra $n$-representation infinite algebra 
and showed that 
$n$-qe Fano algebra plays the role in higher dimensional Auslander-Reiten theory 
as representation infinite algebra in classical Auslander-Reiten theory of path algebras of quivers. 
Actually they introduced  the notion of $n$-hereditary algebra 
and proved  the dichotomy that 
an $n$-hereditary algebra is either  $n$-representation finite (which is defined in \cite{nRF}) 
or $n$-representation infinite. 
Moreover they studied the representation theory of $n$-representation infinite algebras and 
introduced the notions of $n$-preprojective, $n$-preinjective and $n$-regular modules for 
$n$-representation infinite algebras, which are shown to be 
 counterparts of preprojective, preinjective and regular modules 
 in the  representation theory of hereditary algebras of infinite representation type. 

The Auslander correspondence tells us that 
the representation theory of a representation finite algebra $A$ is equivalent to 
the ring theory of the Auslander algebra $\textup{Aus}(A)$ of $A$. 
In higher dimensional Auslander-Reiten theory for $n$-representation infinite algebras, 
the object which is expected to be the counterpart of the Auslander algebra 
is the $n+1$-preprojective algebra $\Pi_{n+1}(\L)$. 
Therefore it is expected that 
the representation theory of $\L$ has close relationship to the ring theory of $\Pi_{n+1}(\L)$. 
We will observe such a phenomenon in Proposition \ref{ppcoh}, 
which suggest  that 
graded coherentness of $n+1$-preprojective algebra has an importance in higher dimensional Auslander-Reiten theory.

We recall the 
definition of $n$-representation infinite algebras (or $n$-qe Fano algebras) 
and the $n+1$ preprojective algebra $\Pi_{n+1}(\L)$.
Let $k$ be a field.

\begin{definition}
Let $n$ be a natural number. 
A finite dimensional $k$-algebra $\L$ is called an $n$-representation infinite algebras  
if the following conditions are satisfied: 
\begin{enumerate}
\item the algebra $\L$ is of finite global dimension, 

\item 
Let $D(\L):= \Hom_{k}(\L,k)$ be the $k$-dual of $\L$ equipped with the natural bi-module structure. 
We define a complex $\theta$ of $\L-\L$ bi-modules to be  $\theta:= \RHom(D(\L),\L)[n]$. 
Then the complex $\theta^{\lotimes_{\L} s}$ is pure for $s \geq 1$. 

\end{enumerate}
(Note that by the second condition for $s=1$, 
we may consider the complex $\theta$ as the module $\Ext_{\L}^{n}(D(\L),\L)$.) 

We denote $\theta^{\lotimes_{\L} s}$ by $\theta^s$. 
The $n+1$-preprojective algebra $\Pi_{n+1}(\L)$  
is defined to be the tensor algebra $T_{\L}(\theta)$. 
\[
\Pi_{n+1}(\L) = \L \oplus \theta \oplus \theta^2 \oplus \theta^3 \oplus \cdots .
\]
We equip $\Pi_{n+1}(\L)$ with the grading $\deg \theta := 1$.
\end{definition}

We define the $n$-Auslander-Reiten translations by 
\[
\tau_{n} :=- \otimes_{\L}\theta \textup{ and } \tau^{-}_{n}:=\Hom_{\L}(\theta,-): \mod \L \to \mod \L.
\] 
Note that the functors $\tau_{n}$ and $\tau_{n}^{-}$ are \textit{not} quasi-inverse to each other 
but only an adjoint pair. 
In other words the unite morphism  
\[
\eta_{M}:M \to \tau_{n}^{-}\tau_{n}M
\] 
of the adjunction $\tau_{n} 
\dashv\tau_{n}^{-}$  is not an isomorphisms for general $M \in \mod \L$.

For a $\L$-module $M$ and a natural number $s$, we denote  by $\eta_{M,s}$ the $\L$-homomorphism  
\[
\eta_{M,s}: \tau_{n}^{s}\tau_{n}^{-s}M
\to \tau_{n}^{s+1}\tau_{n}^{-(s+1)}M  
\]
which is induced by the adjunction $\tau_{n} \dashv \tau_{n}^{-}$.  
We can easily check that if $M$ is either $n$-preprojective, $n$-preinjective or $n$-regular, 
then $\eta_{M,s}$ is an isomorphism for $s \gg 0$.
For a general $\L$-module $M$, we don't know that $\eta_{M,s}$ become an isomorphism for $s \gg 0$. 
The following proposition tells us 
that 
this problem relates to the graded coherentness of the $n+1$-preprojective algebra. 

\begin{proposition}\label{ppcoh}
We retain the above  notations.
\begin{enumerate} 
\item
If $\Pi_{n+1}(\L)$ is right graded coherent, then for each finitely generated $\L$-module $M$ 
there is $s_{0}$ such that $\eta_{M,s}$ is an isomorphism  for $s \geq s_{0}$. 
 
\item 
In the case when $n\leq 2$, 
then the converse holds. 
Namely 
if 
for each finitely generated $\L$-module $M$ 
there is $s_{0}$ such that $\eta_{M,s}$ is an isomorphism  for $s \geq s_{0}$, 
then $\Pi_{n}(\L)$ is graded coherent. 
\end{enumerate}
\end{proposition}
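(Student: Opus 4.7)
The plan is to apply Theorem \ref{grcohcr} (in both directions) to the pair $(A,\sigma)=(\Lambda,\theta)$, and then to translate the resulting purity condition into the language of the adjunction maps $\eta_{M,s}$. The hypotheses of Theorem \ref{grcohcr} are built into the definition of an $n$-representation infinite algebra: $\Lambda$ is right coherent (being finite-dimensional), $\theta$ is finitely presented as a right $\Lambda$-module, and the purity of $\theta^{\lotimes_{\Lambda} s}$ for $s\geq 1$ is precisely condition (2). Moreover, by \cite[Theorem A.1]{Min} one has $\textup{gr.gldim}\,\Pi_{n+1}(\Lambda) \leq \textup{gl.dim}\,\Lambda + \textup{flat.dim}\,{}_{\Lambda}\theta + 1 < \infty$, so Theorem \ref{grcohcr} yields the biconditional: $\Pi_{n+1}(\Lambda)$ is graded right coherent if and only if for every finitely presented $\Lambda$-module $M$ there exists $m$ with $(M\otimes_{\Lambda}\theta^{m})\lotimes_{\Lambda}\theta^{p}$ pure for all $p\geq 0$. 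Since $\Lambda$ is finite-dimensional, finitely generated and finitely presented coincide, so Proposition \ref{ppcoh} reduces to relating this purity condition to the isomorphism condition on $\eta_{M,s}$.

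For Part (1), I would exploit the fact that for an $n$-representation infinite algebra $\Lambda$ the functor $-\lotimes_{\Lambda}\theta$ is an autoequivalence of $D^{b}(\mod\Lambda)$ with inverse $\RHom_{\Lambda}(\theta,-)$, a consequence of $\Lambda$ having finite global dimension together with $\theta$ being the shifted inverse Nakayama bimodule. In the derived category the analogue of $\eta_{M,s}$ is therefore always an isomorphism (both sides identify canonically with $M$); the underived $\eta_{M,s}$ fails to be an isomorphism only through the higher $\Tor$- and $\Ext$-terms appearing in the hyper-$\Tor$ spectral sequence for $\RHom(\theta^{s},M)\lotimes\theta^{s}$. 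Purity of $(M\otimes\theta^{m})\lotimes\theta^{p}$ for $p\geq 0$ controls the relevant $\Tor$-terms directly; the derived autoequivalence converts this into vanishing of the relevant $\Ext^{q}_{\Lambda}(\theta^{s},M)$ for $s$ large. A diagram chase then forces $\tau_{n}^{s}\tau_{n}^{-s}M\cong M$ canonically once $s\geq m$, and $\eta_{M,s}$ becomes the identity under this identification.

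For Part (2) (the converse when $n\leq 2$), assume $\eta_{M',s}$ is an isomorphism for $s\gg 0$ for every finitely generated $\Lambda$-module $M'$. Given a finitely generated $M$, the task is to produce $m$ so that $(M\otimes\theta^{m})\lotimes\theta^{p}$ is pure for $p\geq 0$; equivalently, $\Tor_{i}^{\Lambda}(M\otimes\theta^{m},\theta^{p})=0$ for $1\leq i\leq n$ (higher $\Tor$'s vanish automatically as $\Lambda$ has global dimension at most $n$). For $n\leq 2$ there are at most two non-trivial $\Tor$-degrees to control, and the $\eta$-isomorphism hypothesis, applied to suitable twists $M\otimes\theta^{m'}$, supplies enough commutative squares (through the same hyper-$\Tor$ spectral sequence) to force these $\Tor$'s to vanish via a short diagram chase. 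For $n\geq 3$ the spectral sequence has room for additional entries that the $\eta$-data alone does not determine, which explains why the converse is only asserted for $n\leq 2$.

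The main obstacle is this bridging step in both directions, namely the spectral-sequence bookkeeping that translates purity of derived tensor products into isomorphism of the underived adjunction maps $\eta_{M,s}$ and back. The $n\leq 2$ restriction in Part (2) reflects precisely where the spectral sequence becomes short enough for a single $\eta$-isomorphism condition to determine all relevant higher $\Tor$-contributions.
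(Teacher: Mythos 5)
Your reduction of the proposition to Theorem \ref{grcohcr} is exactly the paper's: the hypotheses on $(\L,\theta)$ are built into the definition of $n$-representation infinite, and finiteness of the graded global dimension of $\Pi_{n+1}(\L)$ (the paper cites \cite[Theorem 4.2]{MM}; your appeal to \cite[Theorem A.1]{Min} also works) turns the theorem into a biconditional between graded coherentness and the purity condition. The problem is the bridge between that purity condition and the maps $\eta_{M,s}$, which is the actual content of the proposition, and in both directions your bridge is either wrong or missing. In part (1) you assert that purity forces $\tau_{n}^{s}\tau_{n}^{-s}M\cong M$ canonically for $s\geq m$ with $\eta_{M,s}$ the identity. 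That is false: already for $n=1$ the composite $\tau_{n}^{\pm s}\tau_{n}^{\mp s}$ kills the preprojective (resp.\ preinjective) part of $M$, so the stable value is not $M$ in general --- which is precisely why the paper only claims that $\eta_{M,s}$ is eventually an isomorphism. The correct use of the purity is: setting $N:=M\otimes_{\L}\theta^{m}$, purity of $N\lotimes_{\L}\theta^{p}$ gives $\RHom_{\L}(\theta^{p},N\otimes_{\L}\theta^{p})\simeq N$ via the derived autoequivalence, hence the underived units $N\to\Hom_{\L}(\theta^{p},N\otimes_{\L}\theta^{p})$ are isomorphisms, and applying $\Hom_{\L}(\theta^{m},-)$ together with the compatibility of units shows $\eta_{M,s}$ is an isomorphism for $s\geq m$ with stable value $\tau_{n}^{-m}\tau_{n}^{m}M$. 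This is repairable, but as written the step is incorrect.

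In part (2) the ``spectral-sequence bookkeeping'' conceals the one genuinely nontrivial point, and your framing purely in terms of $\Tor_{i}^{\L}(M\otimes\theta^{m},\theta^{p})$ misplaces where $n\leq 2$ is used. The hypothesis concerns the $\Hom$-side objects $\tau_{n}^{-s}\tau_{n}^{s}M=\Hom_{\L}(\theta^{s},M\otimes_{\L}\theta^{s})$, and to convert it into purity of derived tensor products one must first know that this underived $\Hom$ computes $\RHom$, i.e.\ that $\RHom_{\L}(\theta^{s},M\otimes_{\L}\theta^{s})$ is pure. That Ext-vanishing statement is the paper's Lemma \ref{2dim}, and it is exactly there that $\gldim\L\leq 2$ (hence $n\leq 2$) enters. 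With it one obtains the chain
\[
(M\otimes_{\L}\theta^{s_{0}})\lotimes_{\L}\theta^{t}\simeq\Hom(\theta^{s_{0}},M\otimes\theta^{s_{0}})\lotimes_{\L}\theta^{s_{0}+t}\simeq\Hom(\theta^{s_{0}+t},M\otimes\theta^{s_{0}+t})\lotimes_{\L}\theta^{s_{0}+t}\simeq M\otimes_{\L}\theta^{s_{0}+t},
\]
where only the middle identification uses the $\eta$-hypothesis and the outer two use Lemma \ref{2dim}. Without isolating this lemma your proposed diagram chase has no input, so the converse direction as sketched does not constitute a proof.
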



\begin{proof}
(1) 
By \cite[Theorem 4.2]{MM} 
the right graded global dimension of $\Pi_{n+1}(\L)$ is $n+1$. 
In particular it is finite.   
Now the verification  is an easy application of Theorem \ref{grcohcr}.2.

(2) 
First note that we have an isomorphism  
$\tau_{n}^{-s}\tau_{n}^sM \cong \Hom_{\L}(\theta^s,M\otimes_{\L}\theta^s)$. 
For $t \geq 0$
we have 
\[
(M \otimes_{\L} \theta^{s_0}) \lotimes_{\L} \theta^{t}
\simeq \Hom(\theta^{s_{0}}, M \otimes_{\L} \theta^{s_{0}})\lotimes_{\L} \theta^{s_{0}+t} 
\simeq \Hom(\theta^{s_{0}+t}, M \otimes_{\L} \theta^{s_{0}+t})\lotimes_{\L} \theta^{s_{0}+t} \simeq M \otimes_{\L}\theta^{s_{0}+t}
\]
where 
the first quasi-isomorphism follows from the following Lemma \ref{2dim},  
the second quasi-isomorphism follows from the assumption and the third quasi-isomorphism follows from  Lemma \ref{2dim}. 
Hence we see that $(M \otimes_{\L} \theta^{s_0}) \lotimes_{\L} \theta^{t}$ is pure for $t\geq 0$.  
By Theorem \ref{grcohcr}.1  we verify the claim. 
\end{proof}

\begin{lemma}\label{2dim}
Let $A$ be a  ring of $\gldim A \leq 2$.
Let $\sigma$ be an $A$-$A$-bi-module such that 
the complex 
$\RR\Hom(\sigma,\sigma)$ is pure. 
Then 
for a finitely presented   $A$-module $M$, 
the complex 
$\RR\Hom(\sigma,M \otimes_{A} \sigma)$ is pure. 
\end{lemma}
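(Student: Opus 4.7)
The plan is to reduce the claim $\Ext^i_A(\sigma, M \otimes_A \sigma) = 0$ for $i \neq 0$ to the vanishing $\Ext^i_A(\sigma, \sigma) = 0$ for $i = 1, 2$ supplied by the hypothesis, by combining a finite presentation of $M$ with the global dimension bound. Since $\gldim A \leq 2$ one automatically has $\Ext^i_A(\sigma, -) = 0$ for all $i \geq 3$, so only the cases $i = 1, 2$ require argument.

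First I would choose a finite presentation $A^m \to A^n \to M \to 0$ of right $A$-modules and tensor it with $\sigma$ over $A$; right exactness of $- \otimes_A \sigma$ yields an exact sequence of right $A$-modules $\sigma^m \xrightarrow{\varphi} \sigma^n \to M \otimes_A \sigma \to 0$. Setting $K := \im \varphi$ and $L := \ker \varphi$, this splits into two short exact sequences
\begin{equation*}
0 \to K \to \sigma^n \to M \otimes_A \sigma \to 0, \qquad 0 \to L \to \sigma^m \to K \to 0.
\end{equation*}
Because $\Ext^*_A(\sigma, -)$ commutes with finite direct sums, the purity of $\RHom_A(\sigma, \sigma)$ gives $\Ext^i_A(\sigma, \sigma^n) = \Ext^i_A(\sigma, \sigma^m) = 0$ for $i = 1, 2$.

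The remaining step is a diagram chase in the two long exact sequences of $\Ext^*_A(\sigma, -)$. Applied to the second short exact sequence, the segment $\Ext^2_A(\sigma, \sigma^m) \to \Ext^2_A(\sigma, K) \to \Ext^3_A(\sigma, L)$ has both outer terms zero (the right hand one by the global dimension bound), forcing $\Ext^2_A(\sigma, K) = 0$. Feeding this into the long exact sequence of the first short exact sequence, together with the vanishings $\Ext^1_A(\sigma, \sigma^n) = \Ext^2_A(\sigma, \sigma^n) = 0$ and $\Ext^3_A(\sigma, K) = 0$, produces the isomorphism $\Ext^1_A(\sigma, M \otimes_A \sigma) \cong \Ext^2_A(\sigma, K) = 0$ and the injection $\Ext^2_A(\sigma, M \otimes_A \sigma) \hookrightarrow \Ext^3_A(\sigma, K) = 0$, which is the desired purity.

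I do not anticipate any serious obstacle: the argument is essentially bookkeeping in two long exact sequences. The hypothesis $\gldim A \leq 2$ is used precisely to truncate after the two short exact sequences coming from a finite presentation of $M$, and the hypothesis on $\sigma$ is used exclusively through the vanishings $\Ext^i_A(\sigma, \sigma^j) = 0$ for $i = 1, 2$ and all $j$.
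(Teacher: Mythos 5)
Your proof is correct and is exactly the ``standard homological algebra'' that the paper's one-line proof alludes to: tensor a finite presentation with $\sigma$, split the result into two short exact sequences, and chase the long exact sequences of $\Ext_A(\sigma,-)$, using $\gldim A \leq 2$ to kill $\Ext^{\geq 3}$ and the purity hypothesis to kill $\Ext^{1,2}(\sigma,\sigma^{\oplus j})$. Only a cosmetic caution: your $\sigma^m,\sigma^n$ denote direct sums $\sigma^{\oplus m},\sigma^{\oplus n}$, whereas the paper reserves $\sigma^n$ for the derived tensor power, so the notation should be adjusted to avoid a clash.
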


\begin{proof}
Take a finite presentation $P \to Q \to M \to 0$ and compute $\Ext^{1,2}_{A}(\sigma,M \otimes_{A} \sigma)$.  
Standard homological algebra shows the claim. 
\end{proof}


\begin{thebibliography}{Sch1}

\bibitem{AZ}
M. Artin, and J. J. Zhang, Noncommutative projective schemes, {\it
Adv. Math.} \textbf{109} (1994), 228-287.
\bibitem{HIO}
M. Herschend, O. Iyama and S. Oppermann, 
$n$-representation infinite algebras, arXiv:1205.1272  



\bibitem{nRF}
O. Iyama and S. Oppermann, 
n-representation-finite algebras and n-APR tilting, 
Trans. Amer. Math. Soc. 363 (2011), no. 12, 6575-6614. 

\bibitem{Min}
H. Minamoto, Ampleness of two-sided tilting complexes, 
Int. Math. Res. Not. IMRN 2012, no. 1, 67-101. 

\bibitem{MM}
H. Minamoto and I. Mori, The structure of AS-Gorenstein algebras. Adv. Math. 226 (2011), no. 5, 4061-4095.


\bibitem{Pol}
A. 
Polishchuk, Noncommutative proj and coherent algebras. Math. Res. Lett. 12 (2005), no. 1, 63-74.

\bibitem{Soublin}
J-P. Soublin. 
Note des membres et correspondats et note pr\'esent\'ees ou transmises par leurs soins. 
C. R. Acad. Sc. Paris, t. 267 s\'erie A 1968 p. 241.
\end{thebibliography}
\end{document}